\renewcommand\eqref[1]{(\ref{#1})} 
\numberwithin{equation}{section}
\theoremstyle{plain}
\newtheorem{thm}{Theorem}[section]
\newtheorem{prop}[thm]{Proposition}
\newtheorem{lem}[thm]{Lemma}
\theoremstyle{definition}
\newtheorem{rem}[thm]{Remark}
\newcommand{\Tr}{\operatorname{Tr\,}}
\begin{document}
\title[Isoperimetric inequalities for Riesz potentials]
{Isoperimetric inequalities for Schatten norms of Riesz potentials}

\author[G. Rozenblum]{G. Rozenblum}
\address{
  Grigori Rozenblum:
  \endgraf
  Department of Mathematics
  \endgraf
  Chalmers University of Technology and The University of Gothenburg
  \endgraf
  Chalmers Tvargatan, 3, S-412 96, Gothenburg
  \endgraf
  Sweden
  \endgraf
  {\it E-mail address} {\rm grigori@chalmers.se}
  }
\author[M. Ruzhansky]{M. Ruzhansky}
\address{
  Michael Ruzhansky:
  \endgraf
  Department of Mathematics
  \endgraf
  Imperial College London
  \endgraf
  180 Queen's Gate, London SW7 2AZ
  \endgraf
  United Kingdom
  \endgraf
  {\it E-mail address} {\rm m.ruzhansky@imperial.ac.uk}
  }
\author[D. Suragan]{D. Suragan}
\address{
  Durvudkhan Suragan:
 \endgraf
  Institute of Mathematics and Mathematical Modelling
  \endgraf
  125 Pushkin str., 050010 Almaty, Kazakhstan
  \endgraf
  and
  \endgraf
  Department of Mathematics
  \endgraf
  Imperial College London
  \endgraf
  180 Queen's Gate, London SW7 2AZ
  \endgraf
  United Kingdom
  \endgraf
  {\it E-mail address} {\rm d.suragan@imperial.ac.uk}}

\thanks{The second and the third authors were supported in parts by the EPSRC
 grant EP/K039407/1 and by the Leverhulme Grant RPG-2014-02,
 as well as by the MESRK grant 5127/GF4.}

     \keywords{Poly-Laplacian, Riesz potential, Schatten $p$-norm, Rayleigh-Faber-Krahn inequality, Hong-Krahn-Szeg{\"o} inequality}
     \subjclass{35P99, 47G40, 35S15}

     \begin{abstract}
     In this note we prove that the ball is a maximiser of some Schatten $p$-norms of the Riesz potential operators among all domains of a given measure in $\mathbb R^{d}$. In particular, the result is valid for the polyharmonic Newton potential operator, which is related to a nonlocal boundary value problem for the poly-Laplacian extending the one considered by M. Kac in the case of the Laplacian, so we obtain  and isoperimetric inequalities for its eigenvalues as well, namely, analogues of Rayleigh-Faber-Krahn and Hong-Krahn-Szeg{\"o} inequalities.
     \end{abstract}
     \maketitle

\section{Introduction}

\subsection{} Let $\Omega\subset\mathbb{R}^{d}$ be a set with finite Lebesgue measure. In $L^2(\Omega)$ we consider the Riesz potential operators
\begin{equation}\label{pot}
(\mathcal{R}_{\alpha,\Omega}f)(x):=\int_{\Omega}\varepsilon_{\alpha,d}(|x-y|)f(y)dy,\quad f\in L^{2}(\Omega),\quad 0<\alpha<d,
\end{equation}
where
\begin{equation}\label{Kernel}
    \varepsilon_{\alpha,d}(|x-y|)=c_{\alpha,d}|x-y|^{\alpha-d}
\end{equation}
and $c_{\alpha,d}$ is a positive constant,
$$c_{\alpha,d}=2^{\alpha-d}\pi^{{-d/2}}\frac{\Gamma(\alpha/2)}{\Gamma((d-\alpha)/2)}.$$
  The operator $\mathcal{R}_{\alpha,\Omega}$ generalises  the Riemann-Liouville ones to several variables and  the Newton potential operators to fractional orders. Since $ c_{\alpha,d}{|x|^{\alpha-d}}$ is the Fourier transform of the function $|\xi|^{-\alpha}$ in $\mathbb{R}^d$, it is the fundamental solution of  $(-\Delta)^{\alpha/2}$, i.e. $(-\Delta_y)^{\alpha/2}\varepsilon_{\alpha,d}(x-y) =\delta_x$. In particular, for an even integer  $\alpha=2m$ with  $ 0<m<d/2$, the function
\begin{equation}
\varepsilon_{2m,d}(|x|)=c_{2m,d} |x|^{2m-d},
\label{i2}
\end{equation}
 is the fundamental solution to the
polyharmonic equation of order $2m$ in $\mathbb R^d$.

So, the polyharmonic Newton potential
\begin{equation}
(\mathcal{L}^{-1}_{2m,\Omega} f)(x):=\int_{\Omega}\varepsilon_{2m,d}(|x-y|)f(y)dy,\, f\in L^{2}(\Omega),
\label{i3}
\end{equation}
is a particular case of the Riesz potential,
\begin{equation}\label{EQ:RL}
\mathcal{L}^{-1}_{2m,\Omega} =\mathcal{R}_{2m,\Omega}.
\end{equation}

In the case $m=1$, i.e., for the Laplacian, under the assumption of a sufficient regularity of the boundary of $\Omega$ (for example, piecewise $C^1$), it is known, see for example Mark Kac  \cite{Kac1},
that the equation
\begin{equation}\label{i4}
u(x)=(\mathcal{L}^{-1}_{2,\Omega} f)(x)=\int_{\Omega}\varepsilon_{2,d}(|x-y|)f(y)dy
\end{equation}
is equivalent to the equation
\begin{equation}\label{i5}
-\Delta u(x)=f(x), \,\,\,\ x\in\Omega,
\end{equation}
with the following  nonlocal integral boundary condition
\begin{equation}
-\frac{1}{2}u(x)+\int_{\partial\Omega}\frac{\partial\varepsilon_{2,d}(|x-y|)}{\partial n_{y}}u(y)d S_{y}-
\int_{\partial\Omega}\varepsilon_{2,d}(|x-y|)\frac{\partial u(y)}{\partial n_{y}}d S_{y}=0,\,\,
x\in\partial\Omega,
\label{i6}
\end{equation}
where $\frac{\partial}{\partial n_{y}}$ denotes the outer normal
derivative at the point $y\in\partial\Omega$.
This approach was further expanded in Kac's book \cite{Kac2}
with   interesting applications, in particular, to the Weyl spectral asymptotics of the  eigenvalue
counting function for the Laplacian, see more in \cite{KS1} and \cite{Sa}.

Considerations in the present note, as it concerns integer values  $m\geq 1$, take care of a generalization of the boundary value problem \eqref{i5}--\eqref{i6}. Moreover, for  noninteger values of  $m$ (i.e., $\alpha\not\in 2\mathbb{Z}$), the operator \eqref{pot} acts as the interior term in the resolvent for  boundary problems for the fractional power of the Laplacian, see, e.g. \cite{Schulze}.

 \subsection{}We are interested in questions of spectral geometry. The main reason why the results are useful, beyond the intrinsic interest of geometric extremum problems, is that they produce \emph{a priori} bounds for spectral invariants of operators on arbitrary domains. For a good general review of isoperimetric inequalities for the Dirichlet, Neumann and other Laplacians we can refer to  \cite{Ben}.

\medskip
This note is motivated in part by a recent paper of Miyanishi and Suzuki \cite{MS}, where
it has been proved that the Schatten norm of the double layer potential is minimised on a
circle among all domains of a given area in $\mathbb R^2$.
In the case of $\Omega\subset\mathbb R^2$, a similar extremum problem  for the logarithmic potential
has been investigated by the second- and third named authors in \cite{Ruzhansky-Suragan-log}.

\medskip
The main results of this note consist in  showing that (under certain restrictions for indices)
the Schatten norms of the Riesz potentials $\mathcal{R}_{\alpha,\Omega}$
over sets of a given measure are maximised on balls.
More precisely, we can summarise our results as follows:

\begin{itemize}

\item
Let $0<\alpha<d$ and let $\Omega^{*}$ be a ball in $\mathbb R^d$;
we set $p_0:=d/\alpha$.
Then for any integer $p$ with $p_0< p\leq\infty$ we have
\begin{equation}
\|\mathcal{R}_{\alpha,\Omega}\|_{p}\leq  \|\mathcal{R}_{\alpha,\Omega^{*}}
\|_{p},
\label{eqthm0}
\end{equation}
for any  domain $\Omega$ with $|\Omega|=|\Omega^{*}|.$
Here $\|\cdot\|_p$ stands for the Schatten $p$-norm, $|\cdot|$ for the Lebesgue measure. The proof is based on  the application of a suitably adapted Brascamp-Lieb-Luttinger inequality. Note that for $p=\infty$ this result gives a variant of the famous Rayleigh-Faber-Krahn inequality for the
 Riesz potentials (and hence also the Newton potential).

\item We also establish the Hong-Krahn-Szeg\"{o} inequality: the
maximum of the second eigenvalue of $\mathcal{R}_{\alpha,\Omega}$ among bounded open sets with a given measure is approached by the union of two identical balls with mutual distance going to infinity.
\end{itemize}
\subsection{}
There is a vast amount of papers dedicated to above type of results for Dirichlet, Neumann and other Laplacians,
see, for example, \cite{BF}, \cite{He} and references therein.
For instance, the questions are still open for boundary value problems of the bi-Laplacian (see \cite[Chapter 11]{He}). The main difficulty arises because the resulting operators of these boundary value problems are not positive for higher powers of the Laplacian. The same is the situation  for the Schatten $p$-norm inequalities: the result for the Dirichlet Laplacian can be obtained from Luttinger's inequality \cite{Lu} but very little is known for other Laplacians (see \cite{Dit1}). The Hong-Krahn-Szeg\"{o} inequality for the Robin Laplacian was proved recently \cite{Ken09} (see \cite{BP} for further discussions).
So, in general, until now there were no examples of a boundary value problem for the poly-Laplacian ($m>1$) for which all the above results had been proved. It seems that there are no isoperimetric results for the fractional order Riesz potentials either.

We believe that  Kac's boundary value problem \eqref{n1}-\eqref{n3} serves as the first example of such boundary value problem, for which all
the above results are true.
This problem
describes the nonlocal boundary conditions for the poly-Laplacian corresponding to the polyharmonic Newton potential operator.

\smallskip
In a bounded connected domain $\Omega\subset\mathbb R^{d}$ with a piecewise $C^{1}$ boundary $\partial\Omega$, as an analogue to \eqref{i5}
we consider the polyharmonic equation
\begin{equation}
(-\Delta_{x})^{m}u(x)=f(x), \quad x\in\Omega,\quad m\in\mathbb{N}.
\label{n1}
\end{equation}
To relate the polyharmonic Newton potential \eqref{i3} to the boundary value problem
\eqref{n1} in $\Omega$, we can use the result of \cite{ KS2} asserting that
for each function  $f\in L^2(\Omega)$, the polyharmonic Newton potential \eqref{i3}
belongs to the class  $H^{2m}(\Omega)$  and satisfies,
for $i=0,1,\ldots,m-1,$ the nonlocal boundary conditions
\begin{multline}\label{n3}
-\frac{1}{2}(-\Delta_{x})^{i}u(x)+
\\
+\sum_{j=0}^{m-i-1}
\int_{\partial\Omega}\frac{\partial}{\partial
n_{y}}(-\Delta_{y})^{m-i-1-j}\varepsilon_{2(m-i),d}(|x-y|)
(-\Delta_{y})^{j}(-\Delta_{y})^{i}u(y)dS_{y}
\\
-\sum_{j=0}^{m-i-1}\int_{\partial\Omega}(-\Delta_{y})^{m-i-1-j}
\varepsilon_{2(m-i),d}(|x-y|)\frac{\partial}{\partial
n_{y}}(-\Delta_{y})^{j}(-\Delta_{y})^{i}u(y)dS_{y}=0,\,\,
 x\in \partial\Omega.
\end{multline}
Conversely, if a function  $u\in H^{2m}(\Omega)$ satisfies \eqref{n1} and the boundary conditions \eqref{n3}
for $i=0,1,\ldots,m-1,$ then it defines the polyharmonic Newton potential by the formulae \eqref{i3}.

\medskip
Therefore, our analysis (of the special case of the Riesz potential) of the polyharmonic Newton potential \eqref{i3}
implies corresponding result for the boundary value problem \eqref{n1}--\eqref{n3}. Note that the analogue of the problem \eqref{n1}--\eqref{n3} for the Kohn Laplacian
and its powers on the Heisenberg group have been recently investigated in
\cite{Ruzhansky-Suragan:Kohn-Laplacian}.
We note that there are  certain interesting questions concerning such operators,  lying beyond  Schatten classes  properties, see e.g. \cite{Dostanic-AM} for different regularised trace formulae.

\medskip
\subsection{} In Section \ref{SEC:result} we discuss main spectral properties of  the Riesz potential operator and formulate the main results of this note.
Their proof will be given in Sections \ref{SEC:proof} and \ref{SEC:5}.

\medskip
The authors would like to thank Rupert Frank for comments.

\section{Main results}\label{SEC:result}
\subsection{The operator $\mathcal{R}_{\alpha,\Omega}$ and its properties}

 We study the spectral problem of the Riesz potentials
\begin{equation}\label{n5}
 \mathcal{R}_{\alpha,\Omega} u=\int_{\Omega}\varepsilon_{\alpha}(|x-y|)u(y)dy=\lambda u, \,u\in L^2(\Omega),
\end{equation}
where $$\varepsilon_{\alpha,d}(|x-y|):={c_{\alpha,d}}{|x-y|^{\alpha-d}}$$ and $0<\alpha<d$.
(we may sometimes drop the subscripts $\alpha,d$ and $\Omega$ in the notation of the operator and the kernel, provided this does not cause a confusion.)
Recall that in the case of the Newton potential operator it is the same as considering the spectrum of the operator corresponding to the boundary value problem \eqref{n1}--\eqref{n3}, which we call $\mathcal{L}=\mathcal{L}_{2m,\Omega}$,
 in a bounded connected domain $\Omega\subset
\mathbb R^{d}$ with a piecewise $C^{1}$ continuous boundary $\partial\Omega$, that is,
\begin{equation}\label{n4}
(-\Delta_{x})^{m}u(x)=\lambda^{-1} u(x),\,\, x \in\Omega,\,\,m\in\mathbb{N},
\end{equation}
with the nonlocal boundary conditions \eqref{n3}.

\smallskip
Recall that $\Omega$ has finite Lebesgue measure. The well-known Schur test shows immediately  that $\mathcal{R}_{\alpha,\Omega}$ is bounded in $L^2(\Omega).$ Moreover, it will be shown soon that this operator is compact in $L^2(\Omega)$ as well and belongs to certain Schatten classes $\mathfrak{S}^p$. Since the Riesz kernel is symmetric, the operator $\mathcal{R}_{\alpha,\Omega}$ is self-adjoint.

Recall that the norm in Schatten class $\mathfrak{S}^p$ (the $p$-norm) of a compact operator $T$  is defined as
\begin{equation}
\| T\|_{p}=\left(\sum_{j=1}^{\infty}s_{j}(T)^{p}\right)^{\frac{1}{p}}, \quad 1\leq p <\infty,\label{5}
\end{equation}
for $s_{1}\geq s_{2}\geq...>0$ being the the singular values of $T$. For $p=\infty$, it is usual to set
$$
\| T\|_{\infty}:=\|T\|,
$$
i.e.,   the operator norm of $T$ in $L^2(\Omega)$.
For compact  self-adjoint operators the singular values are equal to the moduli of (nonzero) eigenvalues, and the corresponding eigenfunctions form a complete orthogonal basis on $L^2$. Additionally, if the operator is non-negative, the words `moduli of' in the previous sentence may be deleted.

\medskip

 The eigenvalues  of $\mathcal{R}_{\alpha,\Omega}$ may be enumerated in the descending order of their moduli,
$$|\lambda_{1}|\geq|\lambda_{2}|\geq...$$
where $\lambda_{j}$ is repeated in this series according to its multiplicity. We denote the
corresponding eigenfunctions by $u_{1}, u_{2},\ldots,$ so that for each eigenvalue $\lambda_{j}$  one and only one
corresponding (normalised) eigenfunction $u_{j}$ is fixed,
$$\mathcal{R}_{\alpha,\Omega}u_j=\lambda_j u_j,\,\,\,\, j=1,2,....$$
The following proposition asserts that the operator $\mathcal{R}_{\alpha,\Omega}$ is compact and evaluates  the decay  rate of its singular numbers.
\begin{prop}\label{Prop.Schatten}Let $\Omega\subset\mathbb{R}^d$ be a measurable set with finite Lebesgue measure, $0<\alpha<d$. Then
\begin{enumerate}
\item The operator $\mathcal{R}_{\alpha,\Omega}$ is nonnegative; this means, in particular, that all eigenvalues are nonnegative, $$\lambda_j\equiv\lambda_j(\mathcal{R}_{\alpha,\Omega})=|\lambda_j(\mathcal{R}_{\alpha,\Omega})|=s_j.$$
    \item For the eigenvalues $\lambda_j$ the following estimate holds:
$$\lambda_j\le C |\Omega|^{\vartheta}j^{-\vartheta},$$
where $\vartheta=\alpha/d$. (In particular, this implies the compactness of the operator.)
\end{enumerate}
\end{prop}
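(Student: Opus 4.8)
The plan is to prove the two assertions separately, with the spectral estimate being the substantive part.

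\medskip

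\textbf{Nonnegativity.} For part (1), I would use the fact recalled in the introduction that $c_{\alpha,d}|x|^{\alpha-d}$ is (up to normalisation) the inverse Fourier transform of $|\xi|^{-\alpha}$. Concretely, for $f\in L^2(\Omega)$ extended by zero to all of $\mathbb R^d$, one has, at least formally,
\[
(\mathcal R_{\alpha,\Omega}f,f)_{L^2(\Omega)}=\int_{\mathbb R^d}|\xi|^{-\alpha}\,|\widehat f(\xi)|^2\,d\xi\ge 0,
\]
since $|\xi|^{-\alpha}>0$ for $0<\alpha<d$. The only point needing a little care is justifying this Plancherel computation: one should first check that the Riesz kernel is locally integrable (true precisely because $\alpha-d>-d$) so that $\mathcal R_{\alpha,\Omega}f$ is well defined, and then approximate $f$ by smooth compactly supported functions, for which the Fourier identity is standard, passing to the limit using the boundedness of $\mathcal R_{\alpha,\Omega}$ (Schur test, already invoked in the text). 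Self-adjointness together with nonnegativity then gives $\lambda_j=|\lambda_j|=s_j$.

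\medskip

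\textbf{The eigenvalue bound.} For part (2), the target is $\lambda_j\le C|\Omega|^{\alpha/d}j^{-\alpha/d}$. I would exploit scaling: if $\Omega_t=t\Omega$ then the kernel $|x-y|^{\alpha-d}$ scales by $t^{\alpha-d}$ and the measure $dy$ by $t^d$, so $\mathcal R_{\alpha,\Omega_t}$ is unitarily equivalent (via $f\mapsto t^{-d/2}f(\cdot/t)$) to $t^{\alpha}\mathcal R_{\alpha,\Omega}$; hence $\lambda_j(\mathcal R_{\alpha,\Omega})$ is homogeneous of degree $\alpha$ in the linear size, i.e. of degree $\alpha/d$ in $|\Omega|$. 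This reduces matters to proving $\lambda_j\le C j^{-\alpha/d}$ for, say, $|\Omega|=1$, and since enlarging the domain only increases the operator (the kernel is positive, so $\mathcal R_{\alpha,\Omega}\le\mathcal R_{\alpha,\Omega'}$ when $\Omega\subset\Omega'$), it suffices to bound the eigenvalues of $\mathcal R_{\alpha,\Omega}$ for $\Omega$ a fixed large ball, or better, to get a bound depending only on $|\Omega|$ directly.

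\medskip

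The cleanest route to the $j^{-\alpha/d}$ decay is via a known result on Schatten/weak-Schatten membership of Riesz potentials restricted to finite-measure sets, or via the Cwikel-type / Birman--Schwinger estimates for $|\xi|^{-\alpha}$ multiplied by the characteristic function of a set of finite measure: the operator $\mathbf 1_\Omega |D|^{-\alpha}\mathbf 1_\Omega$ lies in weak-$\mathfrak S^{d/\alpha}$ with quasinorm controlled by $|\Omega|^{\alpha/d}$, which is exactly the statement $s_j\le C|\Omega|^{\alpha/d}j^{-\alpha/d}$. I would either cite the relevant Cwikel/Birman--Solomyak estimate or give a self-contained argument: split $|\xi|^{-\alpha}=|\xi|^{-\alpha}\mathbf 1_{|\xi|\le R}+|\xi|^{-\alpha}\mathbf 1_{|\xi|>R}$; the high-frequency piece gives an operator of norm $\le CR^{-\alpha}$, while the low-frequency piece $\mathbf 1_\Omega F^{-1}(|\xi|^{-\alpha}\mathbf 1_{|\xi|\le R})F\mathbf 1_\Omega$ is Hilbert--Schmidt with $\|\cdot\|_2^2\le C|\Omega|R^{d-2\alpha}$ (if $2\alpha<d$) or, more robustly, has rank essentially $O(|\Omega|R^d)$ after a further truncation argument, so that choosing $R$ appropriately in terms of $j$ yields $\lambda_{j}\le C|\Omega|^{\alpha/d}j^{-\alpha/d}$. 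The main obstacle is precisely making this two-term (frequency-cutoff) estimate yield the sharp exponent $\alpha/d$ uniformly in the regime $0<\alpha<d$ without an a priori Hilbert--Schmidt bound (which fails once $2\alpha\ge d$); this is handled by an iteration over dyadic frequency scales, counting at each scale the number of ``degrees of freedom'' $|\Omega|R^d$ — a standard but slightly delicate Birman--Schwinger-type covering argument — or simply by quoting the Cwikel estimate. Compactness is then an immediate consequence of $\lambda_j\to 0$.
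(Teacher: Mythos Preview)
Your proposal is correct and lands on the same tool as the paper, namely Cwikel's weak-$\mathfrak S^p$ estimate. The paper's execution is tighter in one respect worth noting: rather than treating (1) and (2) separately, it uses the Riesz convolution identity $\varepsilon_{\alpha/2,d}*\varepsilon_{\alpha/2,d}=(2\pi)^d\varepsilon_{\alpha,d}$ to write the extended operator on $L^2(\mathbb R^d)$ as $(2\pi)^d T^*T$, with $T=|D|^{-\alpha/2}\chi_\Omega$. This single factorisation simultaneously yields nonnegativity and reduces (2) to Cwikel applied to the factor $T$ (with $p=2d/\alpha$), giving $s_j(T)\le Cj^{-\alpha/(2d)}|\Omega|^{\alpha/(2d)}$ and hence $\lambda_j=s_j(T)^2\le Cj^{-\alpha/d}|\Omega|^{\alpha/d}$ in one stroke. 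Your Plancherel argument for (1) and ``cite Cwikel for $\mathbf 1_\Omega|D|^{-\alpha}\mathbf 1_\Omega\in\mathfrak S^{d/\alpha,\infty}$'' for (2) are the same ideas, just not joined; note that Cwikel applies to products $f(X)g(D)$, so to invoke it on the sandwiched operator you implicitly need exactly this $T^*T$ splitting.

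Two small points. First, the scaling and domain-enlargement detour is unnecessary, since Cwikel already delivers the $|\Omega|^{\alpha/d}$ dependence directly via $\|\chi_\Omega\|_{L^{p}}$; moreover an arbitrary set of measure $1$ need not sit inside a fixed ball, so the ``enlarge to a fixed large ball'' step does not work as stated (you seem to notice this and retreat to ``depending only on $|\Omega|$''). Second, the justification ``the kernel is positive, so $\mathcal R_{\alpha,\Omega}\le\mathcal R_{\alpha,\Omega'}$'' is not the right reason: eigenvalue monotonicity under $\Omega\subset\Omega'$ holds, but it comes from min--max (the quadratic form on $L^2(\Omega)$ is the restriction of that on $L^2(\Omega')$), not from pointwise positivity of the kernel.
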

\begin{proof}We start by recalling that
\begin{equation}\label{convolution}
    \varepsilon_{\alpha',d}*\varepsilon_{\alpha'',d}(|x-y|)\equiv\int_{\mathbb{R}^d}\varepsilon_{\alpha',d}(|x-z|)\varepsilon_{\alpha'',d}(|z-y|)dz=(2\pi)^d\varepsilon_{\alpha'+\alpha'',d}(|x-y|),
\end{equation}
as soon as $0<\alpha',\alpha''<\alpha'+\alpha''<d.$  Since $|\xi|^{-(\alpha'+\alpha'')}=|\xi|^{-\alpha'}|\xi|^{-\alpha''}$, this well known relation follows, for example, from the fact, already mentioned,  that $\varepsilon_{\alpha,d}$ is the Fourier transform of $|\xi|^{-\alpha}$, and from the relation between the Fourier transform of a product  and the convolution of the Fourier transforms.
We denote by $\chi_\Omega$ the characteristic function of the set $\Omega$. Consider the operator
\begin{equation}\label{tilde}
\tilde{\mathcal{R}}_{\alpha,\Omega}:\quad f\in L^2(\mathbb{R}^d)\mapsto \chi_\Omega(x)\int_{\mathbb{R}^d}\varepsilon_{\alpha,d}(|x-y|)\chi_\Omega(x)(y)f(y)dy \in L^2(\mathbb{R}^d).
\end{equation}
In the direct sum decomposition $L^2(\mathbb{R}^d)=L^2(\Omega)\oplus L^2(\mathbb{R}^d\setminus\Omega)$ the operator $\tilde{\mathcal{R}}_{\alpha,\Omega}$ is represented as $\mathcal{R}_{\alpha,\Omega}\oplus \pmb{0},$
therefore the (nonzero) singular numbers of operators $\tilde{\mathcal{R}}_{\alpha,\Omega}$ and $\mathcal{R}_{\alpha,\Omega}$ coincide.  Due to \eqref{convolution}, the operator $\tilde{\mathcal{R}}_{\alpha,\Omega}$ can be represented as $ (2\pi)^dT^*T$, where $T:L^2(\mathbb{R}^d)\to L^2(\mathbb{R}^d),\quad$
\begin{equation}\label{T*T}
    Tf(x)=\int\varepsilon_{\alpha/2,d}(|x-y|)\chi_\Omega(y)f(y)dy  .
\end{equation}
The above relations show that, in fact, the operator $\tilde{\mathcal{R}}_{\alpha,\Omega}=T^*T$ and, further on, the operator ${\mathcal{R}}_{\alpha,\Omega}$ are nonnegative; this proves the first statement in the Proposition.
Further on, the eigenvalues  of $\mathcal{R}_{\alpha,\Omega}$ equal the squares of the singular numbers of $T$. Now we can apply the Cwikel estimate, see \cite{Cwikel}, concerning the singular numbers estimates for integral operators with kernel of the form $h(x-y)g(y)$. In our case, $h=\varepsilon_{\alpha/2,d},\quad$ $g=\chi_\Omega$, and thus the Cwikel estimate (1) in \cite{Cwikel}, with $p=2d/\alpha$ gives
\begin{equation}\label{Cw.T}
    s_j(T)\le Cj^{-1/p} \|\chi_\Omega\|_{L^p} =Cj^{-\alpha/(2d)}|\Omega|^{\alpha/(2d)},
\end{equation}
with certain constant $C=C(\alpha,d)$ and, finally,
\begin{equation}\label{est}
    s_j(\mathcal{R}_{\alpha,\Omega})\le C j^{-\theta}|\Omega|^{\theta}, \quad \theta = \alpha/d.
\end{equation}
The proof is complete.
\end{proof}
\begin{rem}Actually, for the operator $T$, and therefore, for the operator $\mathcal{R}_{\alpha,\Omega}$, the estimate  \eqref{est} is accompanied by the asymptotic formula, $s_j(T)\sim C j^{-\alpha/(2d)}|\Omega|^{\alpha/(2d)}$, with an explicitly given constant $C$. For a bounded set $\Omega$, this asymptotics is a particular case of  general results of M.Birman-M.Solomyak's paper \cite{BirSol} concerning integral operators with weak polarity in the kernel. One can easily dispose of this boundedness condition, using the estimate \eqref{est} and the asymptotic approximation procedure, like this was done many times since early 70-s, for example, in \cite{BirSolDif} and \cite{RSV4}.
\end{rem}

It follows from Proposition \ref{Prop.Schatten} that the operator $\mathcal{R}_{\alpha,\Omega}$ belongs to every Schatten class $\mathfrak{S}^p$ with $p>p_0=\alpha/d$ and
\begin{equation}
\|\mathcal{R}_{\alpha,\Omega}\|_{p}=\left(\sum_{j=1}^{\infty}{\lambda_{j}(\Omega)^{p}}\right)^{\frac{1}{p}},\,\, 1\leq p <\infty.\label{6}
\end{equation}

In the course of our further analysis, we will  need to calculate the trace of certain trace class integral operators. For a positive trace class integral operator $\mathbf{K}$ with continuous kernel $K(x,y)$ on a nice set $\Omega$, it is well known that $\Tr(\mathbf{K})=\int_{\Omega} K(x,x) dx.$ This result cannot be used in our more general setting, since our set $\Omega$ is not supposed to be nice and we cannot grant the continuity of the kernels in question. So we need some additional work.

For an operator $\mathbf{K}$ with kernel $K(x,y)$ an exact criterium for membership in $\mathfrak{S}^p$ in  terms of the kernel exists only for $p=2$, i.e., for Hilbert-Schmidt operators (but see also conditions for Schatten classes in terms of the regularity of the kernel in \cite{DR} and in Remark \ref{REM:Kt}). Namely for $\mathbf{K}$ to belong to $\mathfrak{S}^2$, it is necessary and sufficient that $\int\!\!\int_{\Omega\times\Omega}|K(x,y)|^2dxdy<\infty$, moreover, $\|\mathbf{K}\|_2^2$ equals exactly the above integral and for the trace class operator $\mathbf{K}^*\mathbf{K}$ the same  integral equals its trace. We are going to discuss some variations on these well known properties.

First, let us recall a sufficient condition for an integral operator to belong to the Schatten class $\mathfrak{S}^p$ for $p>2$. This condition was found independently by several mathematicians; for us, it is convenient to refer to the paper \cite{Russo}.  The class $L^{p,q}$ is defined as consisting of functions $K(x,y),$ $x,y\in\Omega$ such that $\|K\|_{L^{p,q}}=\left(\left(|K(x,y)|^pdx\right)^{q/p}\right)^{1/q}<\infty$. The main result in \cite{Russo} asserts the following.
\begin{thm}\label{RussoTh} Let $p>2$, $p'=p/(p-1)$ and let the kernel $K$ belong to $L^2(\Omega\times\Omega)$. Suppose that $K$ and the adjoint kernel $K^*(x,y)=\overline{K(y,x)}$ belong to $L^{p',p}$. Then the integral operator $\mathbf{K}$ with kernel $K(x,y)$ belongs to the Schatten class $\mathfrak{S}^p$. Moreover, $\|\mathbf{K}\|_p\le(\|K\|_{L^{p',p}}\|K^*\|_{L^{p',p}})^{\frac12}$
\end{thm}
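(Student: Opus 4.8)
The plan is to obtain Theorem~\ref{RussoTh} by complex interpolation between the two classical endpoints $p=2$ and $p=\infty$, matching the Schatten scale against the mixed-norm scale of kernels. Recall that the Schatten classes form a complex interpolation family, $[\mathfrak{S}^2,\mathfrak{S}^\infty]_\theta=\mathfrak{S}^p$ with $\frac1p=\frac{1-\theta}{2}$, and that the mixed spaces interpolate as $[L^{2,2},L^{1,\infty}]_\theta=L^{p',p}$: the outer exponent (in the variable $y$) runs from $2$ to $\infty$, giving $\frac1p=\frac{1-\theta}{2}$, while the inner exponent (in $x$) runs from $2$ to $1$, giving $\frac{1}{p'}=\frac{1+\theta}{2}$, which is consistent with $p'=p/(p-1)$. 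So one should think of $K\mapsto\mathbf K$ as a single linear map and interpolate its action.

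First I would record the two endpoints. For $p=2$ the Hilbert--Schmidt identity gives $\|\mathbf K\|_2^2=\iint_{\Omega\times\Omega}|K(x,y)|^2\,dx\,dy=\|K\|_{L^{2,2}}^2=\|K\|_{L^{2,2}}\|K^*\|_{L^{2,2}}$, since passing to the adjoint kernel merely swaps the variables. For the other endpoint I would invoke the Schur test: writing $M_1=\operatorname{ess\,sup}_x\int_\Omega|K(x,y)|\,dy$ and $M_2=\operatorname{ess\,sup}_y\int_\Omega|K(x,y)|\,dx$, the operator $\mathbf K$ is bounded on $L^2(\Omega)$ with $\|\mathbf K\|\le (M_1M_2)^{1/2}$; and since $M_2=\|K\|_{L^{1,\infty}}$ and $M_1=\|K^*\|_{L^{1,\infty}}$, this reads $\|\mathbf K\|_\infty\le\bigl(\|K\|_{L^{1,\infty}}\|K^*\|_{L^{1,\infty}}\bigr)^{1/2}$, exactly the $p=\infty$ (hence $p'=1$) instance of the claimed bound. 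Both endpoints already display the geometric-mean shape $(\|K\|\,\|K^*\|)^{1/2}$, and it is this shape that must survive interpolation.

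The interpolation step is the heart of the matter, with one real subtlety: at the $p=\infty$ end the natural control is not a single norm of $K$ but the \emph{geometric mean} of the norms of $K$ and of $K^*$, so $K\mapsto\mathbf K$ is not bounded from $L^{1,\infty}$ into $\mathcal B(L^2(\Omega))$ by itself. I would accommodate this by working with the symmetrised quantity $\|K\|_{Y_q}:=\bigl(\|K\|_{L^{q',q}}\|K^*\|_{L^{q',q}}\bigr)^{1/2}$, which is again a norm (a geometric mean of two norms is subadditive, by the arithmetic--geometric mean inequality): then $K\mapsto\mathbf K$ is isometric from $Y_2=L^2(\Omega\times\Omega)$ onto its range in $\mathfrak{S}^2$ and has norm $\le1$ from $Y_\infty$ into $\mathfrak{S}^\infty$. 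An application of Stein's interpolation theorem for analytic families valued in the Schatten scale --- using an explicit, adjoint-symmetric analytic family $K_z$ obtained by raising $|K|$ to a $z$-dependent power (keeping its phase) and inserting the partial-integral normalising factors of $K$ chosen symmetrically in $x$ and $y$ --- then yields norm $\le1$ from $Y_p$ into $\mathfrak{S}^p$, that is, $\|\mathbf K\|_p\le\bigl(\|K\|_{L^{p',p}}\|K^*\|_{L^{p',p}}\bigr)^{1/2}$.

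The main obstacle I anticipate is precisely the bookkeeping in this last step: one must choose the family $\{K_z\}$ so that $z\mapsto\mathbf K_z$ is weakly analytic with admissible growth in the strip, reduces to the correct $L^2$-normalised operator on $\operatorname{Re}z=0$ and to the correct Schur-bounded operator on $\operatorname{Re}z=1$, and is built symmetrically in $K$ and $K^*$ so that the output constant emerges as the geometric mean rather than a crude product. No difficulty arises from $\Omega$ being merely measurable: the hypothesis $K\in L^2(\Omega\times\Omega)$ already places $\mathbf K$ in $\mathfrak{S}^2$, so finite-rank approximations (needed to justify the analyticity and the duality $\|\cdot\|_p=\sup_{\|\mathbf L\|_{p'}\le1}|\Tr(\mathbf K\mathbf L)|$) are available, and everything is phrased through $L^p$-norms of the kernel. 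Within the paper one may of course simply cite \cite{Russo}, where this argument is carried out in detail.
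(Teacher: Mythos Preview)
The paper does not prove this theorem at all: it is quoted from \cite{Russo} and used as a black box, so there is no ``paper's own proof'' to compare against. Your sketch is in fact a fair outline of Russo's original argument --- complex (Stein) interpolation between the Hilbert--Schmidt endpoint $p=2$ and the Schur-test endpoint $p=\infty$, with the Schatten scale on the target side and the mixed $L^{p',p}$ scale on the source side --- and you correctly identify that the geometric-mean shape of the bound forces one to work with an analytic family rather than with a single linear map and abstract interpolation.

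One genuine slip: your parenthetical claim that ``a geometric mean of two norms is subadditive, by the arithmetic--geometric mean inequality'' is false. If $a=\|K_1\|_A$, $b=\|K_1\|_B$, $c=\|K_2\|_A$, $d=\|K_2\|_B$, subadditivity of $K\mapsto(\|K\|_A\|K\|_B)^{1/2}$ would require $ad+bc\le 2\sqrt{abcd}$, whereas AM--GM gives the reverse inequality. So $\|\cdot\|_{Y_q}$ is not a norm in general, and you cannot invoke the complex interpolation of \emph{spaces} at that point. This does not wreck your plan, because you immediately pivot to Stein interpolation with an explicit analytic family $K_z$, which is exactly how Russo handles it and which does not need $Y_q$ to be a Banach space; but you should drop the incorrect justification and go straight to the analytic-family argument.
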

What we, actually, need are some consequences of this Theorem, established in \cite{Goffeng}. First of all, it is shown there, quite elementarily,  that the condition $K\in L^2(\Omega\times\Omega)$ is excessive and may be removed. However, what we need most is the following result (see Theorem 2.4 in \cite{Goffeng} for the statement in most generality).

\begin{thm}\label{goffength}
Let the kernel $K(x,y)$ satisfy the conditions of Theorem \ref{RussoTh} for some $p>2$. Then for the operator $\mathbf{K}^s$, which belongs to the  trace class by this theorem  for any integer $s>p$, the following formula holds
\begin{equation}\label{TraceGoffeng}
    \Tr(\mathbf{K}^s)=\int\limits_{\Omega^s}\left(\prod_{k=1}^{s}K(x_k,x_{k+1})\right)dx_1dx_2\dots dx_s, \, x_{s+1}\equiv x_1.
\end{equation}
\end{thm}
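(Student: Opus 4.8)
The plan is to establish the trace formula \eqref{TraceGoffeng} by the standard approximation argument: first prove it for operators with nice (say, continuous and compactly supported) kernels, where the classical identity $\Tr(\mathbf{K}^s)=\int_{\Omega^s}\prod_{k=1}^s K(x_k,x_{k+1})\,dx_1\cdots dx_s$ (with $x_{s+1}=x_1$) is elementary, and then pass to the limit using the continuity of both sides with respect to a suitable norm. First I would recall that the right-hand side of \eqref{TraceGoffeng} is, by iterating the definition of composition of integral operators, exactly $\int_\Omega K^{(s)}(x,x)\,dx$ where $K^{(s)}$ denotes the kernel of $\mathbf{K}^s$; the content of the statement is that this ``formal trace'' coincides with the genuine operator trace $\Tr(\mathbf{K}^s)=\sum_j \lambda_j(\mathbf{K})^s$ even though $\Omega$ need not be nice and $K$ need not be continuous.

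Concretely, I would proceed as follows. Fix $p>2$ with $K, K^*\in L^{p',p}(\Omega\times\Omega)$ and an integer $s>p$. Choose a sequence of kernels $K_n(x,y)$ — for instance obtained by restricting $K$ to $\Omega_n\times\Omega_n$ for an exhausting sequence of bounded subsets and mollifying — such that $K_n\to K$ and $K_n^*\to K^*$ in $L^{p',p}$, and such that each $K_n$ is continuous with compact support so that the classical trace identity applies to $\mathbf{K}_n^s$. By Theorem \ref{RussoTh}, the map $K\mapsto \mathbf{K}$ is bounded from $L^{p',p}$ (intersected with the class for which $K^*$ is also in $L^{p',p}$) into $\mathfrak{S}^p$, with $\|\mathbf{K}\|_p\le(\|K\|_{L^{p',p}}\|K^*\|_{L^{p',p}})^{1/2}$; hence $\mathbf{K}_n\to\mathbf{K}$ in $\mathfrak{S}^p$. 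Since $s>p$, raising to the $s$-th power is continuous from $\mathfrak{S}^p$ to $\mathfrak{S}^{p/s}\subset\mathfrak{S}^1$ (using the multiplicativity $\|\mathbf{A}^s\|_{p/s}\le\|\mathbf{A}\|_p^s$ and, for the difference, a telescoping identity $\mathbf{A}^s-\mathbf{B}^s=\sum_{k}\mathbf{A}^k(\mathbf{A}-\mathbf{B})\mathbf{B}^{s-1-k}$ together with the Hölder inequality for Schatten norms), so $\mathbf{K}_n^s\to\mathbf{K}^s$ in $\mathfrak{S}^1$ and therefore $\Tr(\mathbf{K}_n^s)\to\Tr(\mathbf{K}^s)$.

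For the right-hand side, the product $\prod_{k=1}^s K(x_k,x_{k+1})$ is integrable over $\Omega^s$: grouping the factors and applying Hölder's inequality with the mixed-norm $L^{p',p}$ control on pairs of consecutive variables (this is precisely the estimate underlying Theorem \ref{RussoTh}) shows that $\int_{\Omega^s}|\prod_k K(x_k,x_{k+1})|\,dx\le \|K\|_{L^{p',p}}^{s}$ up to the usual index bookkeeping, and the same bound applied to $K-K_n$ in one slot and $K$ or $K_n$ in the others gives convergence of $\int_{\Omega^s}\prod_k K_n(x_k,x_{k+1})\,dx$ to $\int_{\Omega^s}\prod_k K(x_k,x_{k+1})\,dx$. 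Combining the two limits with the classical identity for the approximants yields \eqref{TraceGoffeng}. The main obstacle I expect is the bookkeeping in the mixed-norm Hölder estimate for the cyclic product of $s$ kernels — one must pair the variables so that each variable $x_k$ appears once as a ``first'' argument and once as a ``second'' argument and check that the exponents $p'$ and $p$ distribute consistently around the cycle; this is exactly the computation carried out in \cite{Russo} and refined in \cite{Goffeng}, so I would invoke it rather than redo it, and the remaining Schatten-norm continuity arguments are then routine.
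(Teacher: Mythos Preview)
The paper does not give its own proof of this statement: it is quoted as a known result, with a reference to Theorem~2.4 in \cite{Goffeng}, and is then applied to the Riesz kernel. So there is no in-paper proof to compare your proposal against.

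Your proposal is a sound and essentially standard way to establish such a trace formula, and it is in the spirit of the argument in \cite{Goffeng}. The two ingredients you identify are the right ones: continuity of $K\mapsto\mathbf{K}$ from the mixed-norm space into $\mathfrak{S}^p$ (which is exactly Theorem~\ref{RussoTh}), and the cyclic H\"older estimate for the $s$-fold integral, which guarantees both integrability of the product and stability under approximation. Your telescoping argument for $\mathbf{K}_n^s\to\mathbf{K}^s$ in $\mathfrak{S}^1$ is correct; since $s>p$ one has $\sum_{k=0}^{s-1}\tfrac{k}{p}+\tfrac{1}{p}+\tfrac{s-1-k}{p}=\tfrac{s}{p}>1$, so one can choose H\"older exponents summing to $1$ that dominate the natural ones. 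Two small points worth making explicit: (i) the simultaneous approximation $K_n\to K$ and $K_n^*\to K^*$ in $L^{p',p}$ is easily arranged by truncation and mollification, since taking the adjoint kernel commutes with both operations; (ii) your remark that the right-hand side equals $\int_\Omega K^{(s)}(x,x)\,dx$ is only formal---$K^{(s)}$ need not have a well-defined diagonal restriction---but this is harmless since your actual argument works directly with the $s$-fold integral and never uses this identification.
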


We apply Theorem \ref{goffength} to our kernel $K(x,y)=\varepsilon_{\alpha,d}(|x-y|)$, $x,y\in\Omega$. Since the measure of $\Omega$ is finite, the kernel $K(x,y)$ belongs to $L^{p',p}(\Omega\times\Omega)$ for any $p>\frac{d}{\alpha}$. Therefore, for the trace of  the operator $\mathbf{K}^s$  formula \eqref{TraceGoffeng} holds, and thus, for $s>p_0=\frac{d}{\alpha}$ we have
\begin{multline}\label{traceintegral}
   \sum{\lambda_j(\mathcal{R}_{\alpha,\Omega})^s} = \Tr(\mathcal{R}_{\alpha,\Omega}^s) \\
   =\int\limits_{\Omega^s} \left(\prod_{k=1}^{s}K(x_k,x_{k+1})\right)dx_1dx_2\dots dx_s, \, x_{s+1}\equiv x_1.
\end{multline}

\begin{rem}\label{REM:Kt}
For the membership in the Schatten classes $\mathfrak{S}^p$ with $p<2$ usually a certain regularity of the
kernel is required. In \cite{DR} it was shown, among other things, that if the integral kernel $K$ of an operator
$\mathbf{K}f(x)=\int_\Omega K(x,y) f(y) dy$ satisfies $$K\in H^\mu(\Omega\times\Omega)$$ for a manifold $\Omega$ of dimension $d$ then
$$\mathbf{K}\in \mathfrak{S}^p(L^2(\Omega)) \textrm{  for } p>\frac{2d}{d+2\mu}.$$
In the case of the Riesz potential with $K(x,y)=\varepsilon_{\alpha,d}(|x-y|)$ it can be readily checked that
it implies that
$\mathcal{R}_{\alpha,\Omega}\in \mathfrak{S}^p(L^2(\Omega))$ for $p>\frac{d}{\alpha}$.

As it was already mentioned before if the integral kernel $K^{(s)}$ of the operator $\mathbf{K}^s$ is not continuous,
the formula $\Tr(\mathbf{K}^s)=\int_{\Omega} K^{(s)}(x,x) dx$ may not hold but it can be replaced by
the formula \eqref{TraceGoffeng} (and hence also \eqref{traceintegral}).
However, we can mention another expression for the trace: if $\widetilde{K^{(s)}}$ denotes the averaging
of $K^{(s)}$ with respect to the martingale maximal function, we have
$$
\Tr(\mathbf{K}^s)=\int_{\Omega} \widetilde{K^{(s)}}(x,x) dx,
$$
where we refer to \cite[Section 4]{DR} for the description of $\widetilde{K^{(s)}}$, its properties, and
further references.
\end{rem}

\subsection{Formulation of  main results} We now formulate the main results of this note. Here $|\Omega|$ denotes
the Lebesgue measure of $\Omega$.

\begin{thm} \label{THM:main} Let $\Omega^{*}$ be a ball in $\mathbb R^d$.
Let $p_0:=\frac{d}{\alpha}$.
Then for any integer $p$ with $p_0< p\leq\infty$, we have
\begin{equation}
\|\mathcal{R}_{\alpha,\Omega}\|_{p}\leq  \|\mathcal{R}_{\alpha,\Omega^{*}}\|_{p},\label{eqthm}
\end{equation}
for any domain $\Omega$ with $|\Omega|=|\Omega^{*}|.$
\end{thm}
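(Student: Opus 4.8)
The plan is to handle the two ranges of $p$ separately. For the case of an integer $p$ with $p_0<p<\infty$, I would start from the exact trace formula \eqref{traceintegral}, which expresses $\|\mathcal{R}_{\alpha,\Omega}\|_p^p = \Tr(\mathcal{R}_{\alpha,\Omega}^p)$ as the $p$-fold integral over $\Omega^p$ of the product $\prod_{k=1}^{p}\varepsilon_{\alpha,d}(|x_k-x_{k+1}|)$ with $x_{p+1}\equiv x_1$. Since $\varepsilon_{\alpha,d}(|x-y|)=c_{\alpha,d}|x-y|^{\alpha-d}$ with $c_{\alpha,d}>0$, this is $c_{\alpha,d}^{p}$ times a cyclic integral of a product of decreasing functions of $|x_k-x_{k+1}|$. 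Writing $f(x)=|x|^{\alpha-d}$, which is a radially decreasing function on $\mathbb R^d$, and recalling that replacing $\Omega$ by $\chi_\Omega$ and symmetrizing, one recognizes the integral as exactly the type handled by the Brascamp--Lieb--Luttinger rearrangement inequality: the functional $\int_{(\mathbb R^d)^p}\prod_{k=1}^p f(x_k-x_{k+1})\prod_{k=1}^p \chi_\Omega(x_k)\,dx$ does not decrease when each $\chi_\Omega$ is replaced by its symmetric decreasing rearrangement $\chi_{\Omega^*}$ (and $f$ is already equal to its own rearrangement). Hence $\Tr(\mathcal{R}_{\alpha,\Omega}^p)\le \Tr(\mathcal{R}_{\alpha,\Omega^*}^p)$, and taking $p$-th roots gives \eqref{eqthm} for finite $p$.

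The slightly delicate point is that the classical Brascamp--Lieb--Luttinger inequality is usually stated for a sum of linear combinations of the integration variables of the form $f_j\big(\sum_i a_{ji} x_i\big)$ with more functions than variables; here the relevant "adapted" version is the one in which the $p$ cyclic differences $x_k-x_{k+1}$ together with the $p$ localizing factors $\chi_\Omega(x_k)$ give $2p$ functions of $p$ vector variables, with the coefficient matrix of the desired rank. I would therefore quote (or reprove in the form needed) the statement that for radially decreasing $f$ and any measurable $\Omega$ of finite measure,
\begin{equation*}
\int_{(\mathbb R^d)^p}\Big(\prod_{k=1}^p f(x_k-x_{k+1})\Big)\Big(\prod_{k=1}^p \chi_\Omega(x_k)\Big)\,dx_1\cdots dx_p \le \int_{(\mathbb R^d)^p}\Big(\prod_{k=1}^p f(x_k-x_{k+1})\Big)\Big(\prod_{k=1}^p \chi_{\Omega^*}(x_k)\Big)\,dx_1\cdots dx_p,
\end{equation*}
with $x_{p+1}\equiv x_1$; this is exactly the ``suitably adapted Brascamp--Lieb--Luttinger inequality'' advertised in the introduction, and it is the main analytic input. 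A minor technical check is that the integrand is integrable, which follows from $|\Omega|<\infty$, $p>d/\alpha$, and the local integrability estimates already used to justify \eqref{traceintegral}; also one should note $f=|x|^{\alpha-d}$ is nonnegative and equals its symmetric decreasing rearrangement since $\alpha-d<0$.

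For $p=\infty$ the inequality $\|\mathcal{R}_{\alpha,\Omega}\|_\infty\le\|\mathcal{R}_{\alpha,\Omega^*}\|_\infty$ is the Rayleigh--Faber--Krahn statement for the top eigenvalue $\lambda_1$. The natural route is a limiting argument: by Proposition \ref{Prop.Schatten} the operator is nonnegative and compact, so $\lambda_1(\Omega)=\|\mathcal{R}_{\alpha,\Omega}\|=\lim_{p\to\infty}\|\mathcal{R}_{\alpha,\Omega}\|_p$ along integers $p>p_0$ (since $\|\cdot\|_p\downarrow\|\cdot\|_\infty$ for a fixed compact operator). Applying the finite-$p$ inequality for each such $p$ and passing to the limit on both sides gives $\lambda_1(\Omega)\le\lambda_1(\Omega^*)$. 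Alternatively, and perhaps cleaner to record, one can prove the $p=\infty$ case directly from the variational characterization $\lambda_1(\Omega)=\sup_{\|u\|_{L^2}=1}\langle\mathcal{R}_{\alpha,\Omega}u,u\rangle$: the quadratic form $\langle\mathcal{R}_{\alpha,\Omega}u,u\rangle=c_{\alpha,d}\iint |x-y|^{\alpha-d} u(x)\overline{u(y)}\,dx\,dy$ is, after reducing to $|u|$ (which only increases it), a two-fold rearrangement functional of the Riesz--Sobolev/Brascamp--Lieb--Luttinger type, so replacing $u$ by its symmetric decreasing rearrangement $u^*$ (supported in $\Omega^*$ when $u$ is supported in $\Omega$ and $|\Omega|=|\Omega^*|$) does not decrease the form while preserving the $L^2$ norm; taking the supremum yields $\lambda_1(\Omega)\le\lambda_1(\Omega^*)$.

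I expect the main obstacle to be purely a matter of citing and packaging the rearrangement inequality in exactly the right multilinear form — verifying that the cyclic-difference configuration falls under the Brascamp--Lieb--Luttinger hypotheses (rank of the coefficient matrix, the roles of the $\chi_\Omega$ factors as additional functions), and checking the integrability needed for the trace identity to apply so that the inequality between traces transfers to an inequality between Schatten norms. Everything else (nonnegativity, compactness, the identity $\|T\|_p^p=\Tr T^p$ for $T\ge0$, the monotone limit $\|\cdot\|_p\to\|\cdot\|_\infty$) is already available from Proposition \ref{Prop.Schatten} and the discussion around \eqref{traceintegral}.
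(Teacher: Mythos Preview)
Your proposal is correct and follows essentially the same route as the paper: for finite integer $p>p_0$ you invoke the trace identity \eqref{traceintegral} and then the Brascamp--Lieb--Luttinger inequality applied to the cyclic product of the radially decreasing kernels $\varepsilon_{\alpha,d}$ together with the characteristic functions $\chi_\Omega$, exactly as the paper does in \eqref{eq315}--\eqref{eq317}; for $p=\infty$ your ``alternative'' direct argument via the variational characterization and the Riesz rearrangement inequality is precisely the content of the paper's Lemma~\ref{lem:2} (the paper uses Lemma~\ref{lem:1} to know $u_1>0$, while your reduction to $|u|$ is an equivalent and slightly more self-contained way to get a nonnegative maximizer). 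The only thing the paper does not record is your limiting argument $\|\cdot\|_p\to\|\cdot\|_\infty$ along integers, which is a legitimate shortcut but not needed since the $p=\infty$ case is handled directly.
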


For $p=\infty$, the statement will follow from a variant of the Rayleigh-Faber-Krahn inequality
for the operator $\mathcal{R}_{\alpha,\Omega}$.
For all finite  integers $p$, Theorem \ref{THM:main} follows from the Brascamp-Lieb-Luttinger inequality for symmetric rearrangements of $\Omega$.

\smallskip
We also obtain the following Hong-Krahn-Szeg\"{o} inequality:

\begin{thm}\label{THM:second}
The maximum of the second eigenvalue $\lambda_{2}(\Omega)$ of
$\mathcal{R}_{\alpha,\Omega}$ among  all sets  $\Omega\subset\mathbb R^{d}$ with a given measure
is approached by the union of two identical balls with mutual distance going to infinity.
\end{thm}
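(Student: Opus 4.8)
The plan is to reduce the maximization of $\lambda_2(\mathcal{R}_{\alpha,\Omega})$ to the already-established result for $\lambda_1$ (the $p=\infty$ case of Theorem \ref{THM:main}), following the classical Hong-Krahn-Szeg\"{o} strategy. First I would observe that, since $\mathcal{R}_{\alpha,\Omega}$ is a nonnegative compact operator whose kernel $\varepsilon_{\alpha,d}(|x-y|)$ is strictly positive on $\mathbb R^d\times\mathbb R^d$, a Perron-Frobenius-type argument shows that for a \emph{connected} open set $\Omega$ the top eigenvalue $\lambda_1(\Omega)$ is simple and the first eigenfunction $u_1$ can be chosen strictly positive. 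Consequently, the second eigenfunction $u_2$ must change sign. The key structural point is that a single connected domain cannot do better than a disconnected competitor: given any $\Omega$ with $|\Omega|=c$, I would argue that
$$
\lambda_2(\mathcal{R}_{\alpha,\Omega}) \le \max\bigl\{\lambda_1(\mathcal{R}_{\alpha,B_1}),\lambda_1(\mathcal{R}_{\alpha,B_2})\bigr\}
$$
for a suitable splitting related to the nodal structure, but—crucially—the Riesz kernel couples disjoint pieces (it is not local), so one cannot simply restrict to nodal domains as in the Dirichlet-Laplacian case. Instead I would work with the \emph{disjoint-union} model directly.

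The core computation is: if $\Omega = \Omega' \sqcup \Omega''$ is a disjoint union with $\mathrm{dist}(\Omega',\Omega'')\to\infty$, then the off-diagonal blocks of $\mathcal{R}_{\alpha,\Omega}$ (the integral operators with kernel $\varepsilon_{\alpha,d}(|x-y|)$ for $x\in\Omega'$, $y\in\Omega''$) have operator norm $O(\mathrm{dist}(\Omega',\Omega'')^{\alpha-d})\to 0$ since $\alpha<d$. Hence, by standard perturbation theory for self-adjoint operators, the spectrum of $\mathcal{R}_{\alpha,\Omega}$ converges to the union of the spectra of $\mathcal{R}_{\alpha,\Omega'}$ and $\mathcal{R}_{\alpha,\Omega''}$; in particular
$$
\lambda_2(\mathcal{R}_{\alpha,\Omega'\sqcup\Omega''}) \longrightarrow \max\bigl\{\lambda_2(\mathcal{R}_{\alpha,\Omega'}),\,\min\{\lambda_1(\mathcal{R}_{\alpha,\Omega'}),\lambda_1(\mathcal{R}_{\alpha,\Omega''})\}\bigr\}
$$
as the distance grows. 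Taking $\Omega'$, $\Omega''$ to be two balls of measure $c/2$ each, this limit is $\lambda_1(\mathcal{R}_{\alpha,B_{c/2}})$, which gives the candidate maximizer.

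For the upper bound, given an arbitrary $\Omega$ with $|\Omega|=c$ and second eigenfunction $u_2$, set $\Omega_+ = \{u_2>0\}$, $\Omega_- = \{u_2<0\}$, so $|\Omega_+|+|\Omega_-|\le c$. Testing the Rayleigh quotient for $\mathcal{R}_{\alpha,\Omega}$ against $u_2^{\pm} = \max(\pm u_2,0)$ and using positivity of the kernel (which makes the cross term $\iint \varepsilon_{\alpha,d}(|x-y|)\,u_2^+(x)u_2^-(y)\,dx\,dy$ \emph{positive}, hence dropping it only increases the relevant quotient for at least one of the two pieces), I would obtain
$\lambda_2(\mathcal{R}_{\alpha,\Omega}) \le \max\{\lambda_1(\mathcal{R}_{\alpha,\Omega_+}),\lambda_1(\mathcal{R}_{\alpha,\Omega_-})\}$. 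Then apply Theorem \ref{THM:main} with $p=\infty$ to each piece: $\lambda_1(\mathcal{R}_{\alpha,\Omega_\pm})\le \lambda_1(\mathcal{R}_{\alpha,\Omega_\pm^*})$ where $\Omega_\pm^*$ are balls of the same measure. Since $\lambda_1(\mathcal{R}_{\alpha,B_r})$ is increasing in $r$ (a scaling computation: $\lambda_1$ scales like $r^{\alpha}$ times a fixed constant on the unit ball), and $|\Omega_+|+|\Omega_-|\le c$ forces $\min(|\Omega_+|,|\Omega_-|)\le c/2$ — wait, one needs the complementary bound — one balances the two measures: the worst case is $|\Omega_+|=|\Omega_-|=c/2$, yielding $\lambda_2(\mathcal{R}_{\alpha,\Omega})\le \lambda_1(\mathcal{R}_{\alpha,B_{c/2}})$. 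Combined with the limiting construction above, the supremum equals $\lambda_1(\mathcal{R}_{\alpha,B_{c/2}})$ and is approached, not attained, by two balls receding to infinity.

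The main obstacle I anticipate is the nonlocality of the Riesz kernel in the upper-bound step: unlike the Laplacian, $\mathcal{R}_{\alpha,\Omega}$ restricted to a nodal domain $\Omega_\pm$ is \emph{not} the same as $\mathcal{R}_{\alpha,\Omega}$ acting on functions supported in $\Omega_\pm$ — the operator $\mathcal{R}_{\alpha,\Omega}$ "sees" the whole of $\Omega$. The saving feature is precisely the positivity of the kernel: the variational characterization $\lambda_1(\mathcal{R}_{\alpha,\Omega_\pm}) = \sup_{\mathrm{supp}\,v\subset\Omega_\pm}\langle\mathcal{R}_{\alpha,\Omega}v,v\rangle/\|v\|^2 = \sup\langle\mathcal{R}_{\alpha,\Omega_\pm}v,v\rangle/\|v\|^2$ still holds because extending the domain of integration to all of $\Omega$ only adds a nonnegative contribution to $\langle\mathcal{R}_{\alpha,\Omega}v,v\rangle$ when $v\ge 0$ is supported in $\Omega_\pm$ — actually it adds nothing since $v$ vanishes off $\Omega_\pm$; the point rather is that the positivity of the full kernel lets one bound $\langle\mathcal{R}_{\alpha,\Omega}u_2,u_2\rangle = \lambda_2\|u_2\|^2$ below by $\langle\mathcal{R}_{\alpha,\Omega_+}u_2^+,u_2^+\rangle + \langle\mathcal{R}_{\alpha,\Omega_-}u_2^-,u_2^-\rangle - 2\langle \text{cross}\rangle$ with the cross term positive, so in fact $\lambda_2\|u_2\|^2 \le \langle\mathcal{R}_{\alpha,\Omega_+}u_2^+,u_2^+\rangle + \langle\mathcal{R}_{\alpha,\Omega_-}u_2^-,u_2^-\rangle$, and then orthogonality $\|u_2\|^2=\|u_2^+\|^2+\|u_2^-\|^2$ plus the bound on each Rayleigh quotient closes the argument. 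Care is needed to handle general measurable (not open) $\Omega$ and to ensure $u_2$ genuinely changes sign (which uses simplicity of $\lambda_1$ via strict positivity of the kernel and irreducibility), but these are technical rather than conceptual points.
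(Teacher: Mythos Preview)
Your overall strategy---nodal decomposition of $u_2$ for the upper bound, and two receding balls for the lower bound---is exactly the route the paper takes. However, there is a genuine gap in your upper-bound step, and you even flag it yourself (``wait, one needs the complementary bound'') without resolving it.

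From the quadratic-form identity
\[
\lambda_2\|u_2\|^2 \;\le\; \langle \mathcal{R}_{\alpha,\Omega_+}u_2^+,u_2^+\rangle + \langle \mathcal{R}_{\alpha,\Omega_-}u_2^-,u_2^-\rangle
\;\le\; \lambda_1(\Omega_+)\|u_2^+\|^2 + \lambda_1(\Omega_-)\|u_2^-\|^2,
\]
together with $\|u_2\|^2=\|u_2^+\|^2+\|u_2^-\|^2$, you only get the convex-combination bound
$\lambda_2(\Omega)\le \max\{\lambda_1(\Omega_+),\lambda_1(\Omega_-)\}$.
After applying Rayleigh--Faber--Krahn and the scaling $\lambda_1(B)\sim |B|^{\alpha/d}$, the constraint $|\Omega_+|+|\Omega_-|\le c$ does \emph{not} force the worst case to be the balanced split: $\max\{\lambda_1(B^+),\lambda_1(B^-)\}$ is maximised when one piece carries essentially all the mass, giving the useless bound $\lambda_2(\Omega)\le \lambda_1(B_c)$, which is strictly larger than the target $\lambda_1(B_{c/2})$. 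So the argument as written does not close.

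The fix is to use the \emph{pointwise} eigenfunction equation rather than the global Rayleigh identity. For $x\in\Omega_+$ one has
\[
\lambda_2\,u_2^+(x)=\int_{\Omega_+}\varepsilon_\alpha(|x-y|)u_2^+(y)\,dy+\int_{\Omega_-}\varepsilon_\alpha(|x-y|)u_2^-(y)\,dy,
\]
and the second integral is $\le 0$ by positivity of the kernel. Multiplying by $u_2^+(x)$ and integrating over $\Omega_+$ gives
$\lambda_2\|u_2^+\|^2\le \langle \mathcal{R}_{\alpha,\Omega_+}u_2^+,u_2^+\rangle$, hence $\lambda_2(\Omega)\le \lambda_1(\Omega_+)$; the same computation on $\Omega_-$ yields $\lambda_2(\Omega)\le \lambda_1(\Omega_-)$. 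Thus
\[
\lambda_2(\Omega)\le \min\{\lambda_1(\Omega_+),\lambda_1(\Omega_-)\}\le \min\{\lambda_1(B^+),\lambda_1(B^-)\},
\]
and now monotonicity in the radius together with $\min(|\Omega_+|,|\Omega_-|)\le c/2$ gives the desired $\lambda_2(\Omega)\le \lambda_1(B_{c/2})$. This is precisely what the paper does. Your treatment of the limiting two-ball configuration is fine and parallels the paper's test-function argument.
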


Similar result for the Dirichlet Laplacian is called the Hong-Krahn-Szeg\"{o} inequality. See, for example, \cite{BF} and \cite{Ken09} for further references. We also refer to \cite{BP} which deals with the second eigenvalue of a nonlocal (and nonlinear) $p$-Laplacian operator.
We note that in Theorem \ref{THM:second} we have $\lambda_{2}(\Omega)>0$ since all the eigenvalues of $\mathcal{R}_{\alpha,\Omega}$ are nonnegative (see Proposition \ref{Prop.Schatten}).

\section{Proof of Theorem \ref{THM:main}}
\label{SEC:proof}
Since the integral kernel of $\mathcal{R}_{\alpha,\Omega}$ is positive, the statement, sometimes called Jentsch's theorem, applies, see, e.g.,  \cite{RSV4}.

\begin{lem} \label{lem:1} The eigenvalue $\lambda_{1}$ of $\mathcal{R}_{\alpha,\Omega}$ with the largest modulus
is positive and simple; the corresponding eigenfunction $u_{1}$ is positive, and any other eigenfunction $u_{j},\,\, j> 1,$ is sign changing in $\Omega$.
\end{lem}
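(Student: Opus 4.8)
The statement to prove is Lemma \ref{lem:1}, which is a Krein–Rutman / Jentzsch (Perron–Frobenius) type result for the positive integral operator $\mathcal{R}_{\alpha,\Omega}$. The plan is to exploit three facts already available: the operator is compact (Proposition \ref{Prop.Schatten}), self-adjoint and nonnegative (Proposition \ref{Prop.Schatten}), and its integral kernel $\varepsilon_{\alpha,d}(|x-y|) = c_{\alpha,d}|x-y|^{\alpha-d}$ is \emph{strictly positive} for a.e.\ $(x,y) \in \Omega\times\Omega$. The last property is what upgrades the weak maximum principle ($\|\mathcal{R}_{\alpha,\Omega}\| = \lambda_1$ is an eigenvalue, automatic from compactness and self-adjointness of a nonnegative operator) to the strong conclusions: simplicity of $\lambda_1$ and positivity of $u_1$.

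First I would show that the top eigenvalue has a nonnegative eigenfunction. Since $\mathcal{R}_{\alpha,\Omega}$ is compact, self-adjoint and nonnegative, $\lambda_1 = \|\mathcal{R}_{\alpha,\Omega}\| = \sup_{\|u\|=1}\langle \mathcal{R}_{\alpha,\Omega}u, u\rangle > 0$ is attained at some normalized $u_1$; since the kernel is positive, $\langle \mathcal{R}_{\alpha,\Omega}|u_1|, |u_1|\rangle \ge \langle \mathcal{R}_{\alpha,\Omega}u_1, u_1\rangle = \lambda_1$, so $|u_1|$ is also a maximizer and hence also an eigenfunction for $\lambda_1$. Thus we may assume $u_1 \ge 0$. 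Next, strict positivity a.e.: for $x$ in a set of full measure in $\Omega$,
$$\lambda_1 u_1(x) = \int_\Omega \varepsilon_{\alpha,d}(|x-y|) u_1(y)\,dy,$$
and since the integrand is strictly positive a.e.\ wherever $u_1 > 0$, and $u_1$ is not identically zero, the right-hand side is strictly positive for a.e.\ $x$. Hence $u_1 > 0$ a.e.\ in $\Omega$.

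For simplicity: suppose $v$ is another eigenfunction for $\lambda_1$, linearly independent of $u_1$; taking $v$ real (real and imaginary parts are eigenfunctions) we may choose $c$ so that $w := u_1 - c v$ changes sign or vanishes on a set of positive measure while being a nonzero eigenfunction for $\lambda_1$. Then $|w|$ is again a maximizer, hence an eigenfunction, hence by the previous paragraph strictly positive a.e.; but that contradicts $w$ vanishing on a positive-measure set (or, if $w$ genuinely changes sign, $\langle \mathcal{R}_{\alpha,\Omega}|w|,|w|\rangle > \langle \mathcal{R}_{\alpha,\Omega}w,w\rangle = \lambda_1\|w\|^2$, since the strict positivity of the kernel makes the inequality strict whenever $w$ takes both signs on sets of positive measure — contradicting that $\lambda_1$ is the supremum). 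Either way $\lambda_1$ is simple. Finally, for $j > 1$: $u_j \perp u_1$ in $L^2(\Omega)$ and $u_1 > 0$ a.e., so $\int_\Omega u_j u_1 = 0$ forces $u_j$ to take both signs on sets of positive measure, i.e.\ $u_j$ is sign-changing.

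The main obstacle is the rigorous justification that the inequality $\langle \mathcal{R}_{\alpha,\Omega}|w|, |w|\rangle \ge \langle \mathcal{R}_{\alpha,\Omega}w, w\rangle$ is \emph{strict} whenever $w$ is not a.e.\ of one sign — this is where the a.e.\ strict positivity of the kernel on $\Omega\times\Omega$ (rather than mere nonnegativity) is essential, and one must be careful that "nice set" hypotheses are not needed: the argument uses only measurability and finiteness of $|\Omega|$, together with the elementary fact that $\varepsilon_{\alpha,d}(|x-y|) > 0$ for all $x \ne y$. Alternatively, and perhaps cleanest for the writeup, one simply invokes the standard Jentzsch / Krein–Rutman theorem for integral operators with a.e.\ positive kernel on a space of finite measure, as in the reference \cite{RSV4} already cited before the lemma; then the proof reduces to checking the two hypotheses (compactness, positivity of the kernel), both of which are in hand.
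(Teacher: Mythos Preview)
Your proposal is correct. The paper does not actually give a self-contained proof of this lemma: the sentence immediately preceding the lemma already says ``Since the integral kernel of $\mathcal{R}_{\alpha,\Omega}$ is positive, the statement, sometimes called Jentsch's theorem, applies, see, e.g., \cite{RSV4},'' and the lemma is then stated without further argument. This is exactly the alternative you describe in your final paragraph---verify compactness and a.e.\ positivity of the kernel, then cite Jentzsch from Reed--Simon.

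The detailed Perron--Frobenius-style argument you give first (passing to $|u_1|$, using strict positivity of the kernel to get $u_1>0$ a.e., deriving simplicity from the strict inequality $\langle \mathcal{R}_{\alpha,\Omega}|w|,|w|\rangle>\langle \mathcal{R}_{\alpha,\Omega}w,w\rangle$ for sign-changing $w$, and deducing that $u_j$ for $j>1$ changes sign from orthogonality to $u_1$) is correct and is essentially the content of the cited theorem. It is not present in the paper, which is content with the reference. One small streamlining for your simplicity step: rather than searching for a constant $c$ making $w=u_1-cv$ vanish on a set of positive measure, it is cleaner to replace $v$ by its component orthogonal to $u_1$ (still an eigenfunction for $\lambda_1$), so that $\int_\Omega v\,u_1=0$ with $u_1>0$ a.e.\ forces $v$ to change sign, and then your strict-inequality argument applies directly to $v$.
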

(Note that the positivity of $\lambda_1$ is already known, since the operator $\mathcal{R}_{\alpha,\Omega}$ is nonnegative; what is important is the positivity of $u_1$.)

We will also use this Lemma in Section \ref{SEC:5}. Recall that we have already established  in Proposition \ref{Prop.Schatten} that   all $\lambda_{j}(\Omega),\,i=1,2,...,$ are positive for any domain $\Omega$.

\medskip
Now we prove the following analogue of  Rayleigh-Faber-Krahn theorem for the operator $\mathcal{R}_{\alpha,\Omega}$. We will use this fact further on,  in Section \ref{SEC:5}.
See \cite{Ben} for a general discussion of this subject.

\begin{lem} \label{lem:2} The ball $\Omega^{*}$ is a
maximiser of the first eigenvalue of the operator $\mathcal{R}_{\alpha,\Omega}$ among all domains of a given volume, i.e.
$$
0<\lambda_{1}(\Omega{})\leq \lambda_{1}(\Omega^*)
$$
for an arbitrary domain $\Omega\subset \mathbb R^{d}$ with $|\Omega|=|\Omega^{*}|.$
\end{lem}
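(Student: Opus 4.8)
The plan is to deduce Lemma \ref{lem:2} from the variational characterization of $\lambda_1$ together with the symmetric decreasing rearrangement, in the same spirit as the classical Rayleigh--Faber--Krahn argument. Since $\mathcal{R}_{\alpha,\Omega}$ is nonnegative and self-adjoint with positive kernel, Lemma \ref{lem:1} tells us that $\lambda_1(\Omega)$ is attained on a positive eigenfunction $u_1$, and in particular
\[
\lambda_1(\Omega)=\max_{0\neq f\in L^2(\Omega)}\frac{\langle \mathcal{R}_{\alpha,\Omega}f,f\rangle}{\|f\|_{L^2(\Omega)}^2}
=\max_{0\neq f}\frac{\int_\Omega\int_\Omega \varepsilon_{\alpha,d}(|x-y|)f(x)f(y)\,dx\,dy}{\int_\Omega |f(x)|^2\,dx}.
\]
Because the kernel is positive, in the supremum we may restrict to nonnegative $f$ (replacing $f$ by $|f|$ only increases the numerator and leaves the denominator unchanged). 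This is the quantity I want to compare between $\Omega$ and $\Omega^*$.

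First I would fix the extremal positive eigenfunction $f=u_1\geq 0$ for $\Omega$ and pass to its symmetric decreasing rearrangement $f^*$, which is supported in $\Omega^*$ (this uses $|\Omega|=|\Omega^*|$ and that $\Omega^*$ is the ball centered at the origin of that measure). The denominator is preserved: $\|f^*\|_{L^2}=\|f\|_{L^2}$ by equimeasurability. For the numerator I would invoke the Riesz rearrangement inequality (the two-function case of the Brascamp--Lieb--Luttinger inequality already being cited in the paper): since $\varepsilon_{\alpha,d}(|x-y|)=c_{\alpha,d}|x-y|^{\alpha-d}$ is, for $0<\alpha<d$, a symmetric \emph{decreasing} function of $|x-y|$ (it equals its own symmetric decreasing rearrangement), we get
\[
\int\!\!\int \varepsilon_{\alpha,d}(|x-y|)\,f(x)f(y)\,dx\,dy
\;\le\;
\int\!\!\int \varepsilon_{\alpha,d}(|x-y|)\,f^*(x)f^*(y)\,dx\,dy .
\]
Hence the Rayleigh quotient for $\Omega$ evaluated at $u_1$ is bounded above by the Rayleigh quotient for $\Omega^*$ evaluated at $f^*$, which in turn is $\le \lambda_1(\Omega^*)$ by the variational principle on the ball. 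Chaining these inequalities yields $\lambda_1(\Omega)\le\lambda_1(\Omega^*)$, and the strict positivity $\lambda_1(\Omega)>0$ is already recorded in Proposition \ref{Prop.Schatten} and Lemma \ref{lem:1}.

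The only genuinely delicate point is justifying the use of the Riesz rearrangement inequality for the singular kernel $|x-y|^{\alpha-d}$ with $L^2$ densities on sets of finite measure: one must check the relevant integrals are finite (they are, since $\varepsilon_{\alpha,d}\in L^{1}_{\mathrm{loc}}$ and $f,f^*\in L^2$ with bounded support, so the double integral converges by Young's inequality or a direct Schur-type bound) and that the inequality applies to a decreasing but unbounded kernel — this is standard and handled by truncating the kernel at level $N$, applying the classical three-function Riesz inequality to the bounded truncations, and letting $N\to\infty$ via monotone convergence. I expect this truncation/limiting argument to be the main (though routine) obstacle; everything else is a direct assembly of the variational principle, equimeasurability, and rearrangement. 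As a remark, the same computation with $\mathcal{R}_{\alpha,\Omega^*}$ shows the maximizer $u_1(\Omega^*)$ is itself radially symmetric decreasing, consistent with Lemma \ref{lem:1}.
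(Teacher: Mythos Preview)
Your argument is correct and follows essentially the same route as the paper: take the positive first eigenfunction $u_1$, pass to its symmetric decreasing rearrangement, use equimeasurability for the $L^2$-norm and the Riesz rearrangement inequality for the numerator (the paper simply cites \cite{LL} rather than spelling out the truncation step you mention), and conclude via the variational principle on $\Omega^*$. The only minor difference is cosmetic: the paper does not explicitly invoke Lemma~\ref{lem:1} to justify $u_1\ge 0$, and it omits your remarks on finiteness of the double integral and radial symmetry of the ball's maximiser.
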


\begin{rem}\label{REM:op}
In other words Lemma \ref{lem:2} says that the operator norm of
$\mathcal{R}_{\alpha,\Omega}$ is maximised in the ball among all Euclidean domains of a given volume.
\end{rem}

\begin{proof}[Proof of Lemma \ref{lem:2}] Let $\Omega$ be a bounded measurable set in $\mathbb R^{d}.$
Its symmetric rearrangement $\Omega^{\ast}$ is an open ball centred at $0$
with the measure equal to the measure of $\Omega,$ i.e. $|\Omega^{\ast}|=|\Omega|$.
Let $u$ be a nonnegative measurable function in $\Omega$,
such that all its positive level sets have finite measure.
With the definition of the symmetric-decreasing rearrangement of $u$
we can use the layer-cake decomposition \cite{LL},
which expresses a nonnegative function $u$ in terms of its level sets as
\begin{equation}
u(x)=\intop_{0}^{\infty}\chi_{\left\{ u(x)>t\right\}
}dt,
\end{equation}
where $\chi$ is the characteristic function of the corresponding domain.
The function
\begin{equation}
u^{\ast}(x):=\intop_{0}^{\infty}\chi_{\left\{u(x)>t\right\}^{\ast}\label{2}
}dt
\end{equation}
is called the (radially) symmetric-decreasing rearrangement of a nonnegative measurable function $u$.

Recalling the Riesz inequality \cite{LL} and the fact that
$\varepsilon_{\alpha}(|x-y|)$ is a symmetric-decreasing function,  we obtain
\begin{equation}\label{n10}
\int_{\Omega}\int_{\Omega}
u_{1}(y)\varepsilon_{\alpha}(|y-x|)u_{1}(x)dydx
\leq
\int_{\Omega^{\ast}}\int_{\Omega^{\ast}}
u_{1}^{\ast}(y)\varepsilon_{\alpha}(|y-x|)u_{1}^{\ast}(x)dydx.
\end{equation}
In addition, for each nonnegative function $u\in L^2(\Omega)$ we have
\begin{equation}\label{n11}
\| u\|_{L^2(\Omega)}=\| u^{\ast}\|_{L^2(\Omega^{\ast})}.
\end{equation}
Therefore, from \eqref{n10}, \eqref{n11} and the variational principle for $\lambda_{1}(\Omega^{\ast})$, we get
$$
\lambda_{1}(\Omega) =\frac{\int_{\Omega}\int_{\Omega}
u_{1}(y)\varepsilon_{\alpha}(|y-x|)u_{1}(x)dydx}
{\int_{\Omega}|u_{1}(x)|^{2}dx}\leq$$
$$\frac{\int_{\Omega^{\ast}}\int_{\Omega^{\ast}}
u^{\ast}_{1}(y)\varepsilon_{\alpha}(|y-x|)
u^{\ast}_{1}(x)dydx}{\int_{\Omega^{\ast}}|u^{\ast}_{1}(x)|^{2}dx}\leq
$$
$$
\sup_{v\in
L^2(\Omega^{\ast}),v\neq 0}\frac{\int_{\Omega^{\ast}}\int_{\Omega^{\ast}}
v(y)\varepsilon_{\alpha}(|y-x|)
v(x)dydx}{\int_{\Omega^{\ast}}|v(x)|^{2}dx}={\lambda_{1}(\Omega^{\ast})},
$$
completing the proof.
\end{proof}

Now we can finish the
\emph{Proof} of Theorem \ref{THM:main}. For integer values of $p>p_0$ we have by Theorem \ref{goffength}:

\begin{equation}
\sum_{j=1}^{\infty}{\lambda_{j}^{p}(\Omega)}=\int_{\Omega}...
\int_{\Omega}\varepsilon_{\alpha}(|y_{1}-y_{2}|)...\varepsilon_{\alpha}(|y_{p}-y_{1}|)dy_{1}...dy_{p},\,\,\, p> p_{0}, \,\,p\in
\mathbb N.\label{eq315}
\end{equation}
It follows from the Brascamp-Lieb-Luttinger \cite{BLL} inequality that
$$
\int_{\Omega^p}\varepsilon_{\alpha}(|y_{1}-y_{2}|)...\varepsilon_{\alpha}(|y_{p}-y_{1}|)dy_{1}...dy_{p}\leq$$
\begin{equation}\int_{\Omega^{*}}...
\int_{\Omega*}\varepsilon_{\alpha}(|y_{1}-y_{2}|)...\varepsilon_{\alpha}(|y_{p}-y_{1}|)dy_{1}...dy_{p},\label{eq316}
\end{equation}
which proves
\begin{equation}
\sum_{j=1}^{\infty}{\lambda_{j}^{p}(\Omega)}\leq
\sum_{j=1}^{\infty}{\lambda_{j}^{p}(\Omega^{*})},\,\,\, p\in \mathbb N,\,\,\, p> p_{0},\label{eq317}
\end{equation}
for $\Omega\subset \mathbb R^{d}$ with $|\Omega|=|\Omega^{*}|$.
Here we have used that the kernel $\varepsilon_{\alpha}$ is a symmetric-decreasing function in $\Omega^{*}\times\Omega^{*}$, i.e.
$$\varepsilon_{\alpha}^{*}(|x-y|)=\varepsilon_{\alpha}(|x-y|),\,\,\,\, x, y\in \Omega^{*}\times\Omega^{*}.$$
The proof is complete.

\section{Proof of Theorem \ref{THM:second}}
\label{SEC:5}

To prove Theorem \ref{THM:second} we will use the classical two-ball trick, Lemma \ref{lem:1} and Lemma \ref{lem:2}.

\begin{proof} Let us introduce the following sets:
$$\Omega^{+}:=\{x: u_{2}(x)>0\},\,\,\Omega^{-}:=\{x: u_{2}(x)<0\}.$$
Therefore,
$$u_{2}(x)>0,\,\, \forall x\in\Omega^{+}\subset \Omega, \,\, \Omega^{+}\neq\{\emptyset\},$$
$$u_{2}(x)<0,\,\, \forall x\in\Omega^{-}\subset \Omega, \,\, \Omega^{-}\neq\{\emptyset\},$$
and it follows from Lemma \ref{lem:1} that
the domains $\Omega^{-}$ and $\Omega^{+}$  both have positive Lebesgue measure.
Taking
\begin{equation}
u_{2}^{+}(x):=\left\{
\begin{array}{ll}
    u_{2}(x)\,\,{\rm in}\,\, \Omega^{+},\\
    0 \,\, {\rm otherwise}, \\
\end{array}
\right.
\end{equation}
and
$$
u_{2}^{-}(x):=\left\{
\begin{array}{ll}
    u_{2}(x)\,\,{\rm in}\,\, \Omega^{-},\\
    0 \,\, {\rm otherwise}, \\
\end{array}
\right.
$$
we obtain
$$\lambda_{2}(\Omega)u_{2}(x)=\int_{\Omega^{+}}\varepsilon_{\alpha}(|x-y|)u_{2}^{+}(y)dy+
\int_{\Omega^{-}}\varepsilon_{\alpha}(|x-y|)u_{2}^{-}(y)dy,\,\, x\in\Omega.$$
Multiplying by $u_{2}^{+}(x)$ and integrating over $\Omega^{+}$ we get

\begin{multline*}
\lambda_{2}(\Omega)\int_{\Omega^{+}}|u_{2}^{+}(x)|^{2}dx=
\int_{\Omega^{+}}u_{2}^{+}(x)\int_{\Omega^{+}}\varepsilon_{\alpha}(|x-y|)u_{2}^{+}(y)dydx \\
+\int_{\Omega^{+}}u_{2}^{+}(x)
\int_{\Omega^{-}}\varepsilon_{\alpha}(|x-y|)u_{2}^{-}(y)dydx,\,\, x\in\Omega.
\end{multline*}
The second term on the right hand side is non-positive since  the integrand is non-positive.
Therefore,

$$\lambda_{2}(\Omega)\int_{\Omega^{+}}|u_{2}^{+}(x)|^{2}dx\leq
\int_{\Omega^{+}}u_{2}^{+}(x)\int_{\Omega^{+}}\varepsilon_{\alpha}(|x-y|)u_{2}^{+}(y)dydx,$$
that is,
$$\frac{\int_{\Omega^{+}}u_{2}^{+}(x)\int_{\Omega^{+}}\varepsilon_{\alpha}(|x-y|)u_{2}^{+}(y)dydx}{\int_{\Omega^{+}}|u_{2}^{+}(x)|^{2}dx}\geq\lambda_{2}(\Omega).$$
By the variational principle,
$$\lambda_{1}(\Omega^{+})=
\sup_{v\in L^{2}(\Omega^{+}), v\not\equiv 0}\frac{\int_{\Omega^{+}}v(x)\int_{\Omega^{+}}\varepsilon_{\alpha}(|x-y|)v(y)dydx}{\int_{\Omega^{+}}|v(x)|^{2}dx}$$
$$\geq\frac{\int_{\Omega^{+}}u_{2}^{+}(x)\int_{\Omega^{+}}\varepsilon_{\alpha}(|x-y|)u_{2}^{+}(y)dydx}{\int_{\Omega^{+}}|u_{2}^{+}(x)|^{2}dx}\geq\lambda_{2}(\Omega).$$
Similarly, we get
$$\lambda_{1}(\Omega^{-})\geq\lambda_{2}(\Omega).$$
So we have
\begin{equation}\label{13}
\lambda_{1}(\Omega^{+})\geq \lambda_{2}(\Omega),\,\,
\lambda_{1}(\Omega^{-})\geq \lambda_{2}(\Omega).
\end{equation}
We now introduce $B^{+}$ and $B^{-}$, the balls of the same volume as $\Omega^{+}$ and $\Omega^{-}$, respectively.
Due to Lemma \ref{lem:2}, we have
\begin{equation}\label{14}
\lambda_{1}(B^{+})\geq\lambda_{1}(\Omega^{+}),\,\,\lambda_{1}(B^{-})\geq\lambda_{1}(\Omega^{-}).
\end{equation}
Comparing \eqref{13} and \eqref{14}, we obtain
\begin{equation}\label{15}
\min \{\lambda_{1}(B^{+}),\,\lambda_{1}(B^{-})\}\geq \lambda_{2}(\Omega).
\end{equation}
Now let us consider the set  $B^{+}\cup B^{-},$ with the balls $B^{\pm}$ placed  at distance $l$, i.e.
$$l= {\rm dist}(B^{+},B^{-}),$$
Denote by $u_1^\circledast$ the first normalised eigenfunction of $\mathcal{R}_{\alpha,B^{+}\cup B^{-}}$ and take $u_{+}$ and $u_{-}$ being the first normalised eigenfunctions of each single ball, i.e., of operators $\mathcal{R}_{\alpha,B^{\pm}}.$  We introduce the function $v^\circledast\in L^2(B^{+}\cup B^{-})$, which equals $u_+$ in $B^{+}$ and $\gamma u_-$ in $B^{-}$. Since the functions $u_+,u_-, u^\circledast$ are positive, it is possible to find a real number $\gamma$ so that $v^\circledast$ is orthogonal to $u_1^\circledast$.
 Observe that
\begin{equation}
\int_{B^{+}\cup B^{-}}\int_{B^{+}\cup B^{-}} v^\circledast(x)v^\circledast(y)
\varepsilon_{\alpha}(|x-y|)dxdy
=\sum_{i=1}^{4}\mathcal{I}_{i},
\end{equation}
where
$$\mathcal{I}_{1}:=\int_{B^{+}}\int_{B^{+}}u_{+}(x)u_{+}(y)
\varepsilon_{\alpha}(|x-y|)dxdy,$$
$$\mathcal{I}_{2}:=\int_{B^{+}}\int_{B^{-}}u_{+}(x)u_{-}(y)
\varepsilon_{\alpha}(|x-y|)dxdy,$$
$$\mathcal{I}_{3}:=\gamma\int_{B^{-}}\int_{B^{+}}u_{-}(x)u_{+}(y)
\varepsilon_{\alpha}(|x-y|)dxdy,$$
$$\mathcal{I}_{4}:=\gamma^{2}\int_{B^{-}}\int_{B^{-}}u_{-}(x)u_{-}(y)
\varepsilon_{\alpha}(|x-y|)dxdy.$$

By the variational principle,
$$\lambda_{2}(B^{+}\cup B^{-})= \sup_{v\in L^{2}(B^{+}\bigcup B^{-}),\,v\perp u_{1},\,\parallel v\parallel=1}{\int_{B^{+}\cup B^{-}}\int_{B^{+}\cup B^{-}}v(x)v(y)\varepsilon_{\alpha}(|x-y|)dxdy}.$$
Since by construction $v^\circledast$ is orthogonal to $u_{1}$,
we get
$$\lambda_{2}(B^{+}\cup B^{-})\geq {\int_{B^{+}\cup B^{-}}\int_{B^{+}\cup B^{-}} v^\circledast(x)v^\circledast(y) \varepsilon_{\alpha}(|x-y|)dxdy}
= {\sum_{i=1}^{4}\mathcal{I}_{i}}.$$
On the other hand, since $u_{+}$ and $u_{-}$ are the first normalised eigenfunctions (by Lemma \ref{lem:1} both are positive everywhere) of each single ball $B^{+}$ and $B^{-}$, we have

$$\lambda_1(B^{\pm})=\int_{B^{\pm}}\int_{B^{\pm}}u_{\pm}(x)u_{\pm}(y)\varepsilon_{\alpha}(|x-y|)dxdy$$

Summarising the above facts, we obtain
\begin{multline}
\lambda_{2}(B^{+}\cup B^{-})\geq\\
\frac{\int_{B^{+}}\int_{B^{+}}u_{+}(x)u_{+}(y)\varepsilon_{\alpha}(|x-y|)dxdy+\gamma^{2}\int_{B^{-}}\int_{B^{-}}u_{-}(x)u_{-}(y)\varepsilon_{\alpha}(|x-y|)dxdy+\mathcal{I}_{2}+\mathcal{I}_{3}}{\lambda_{1}(B^{+})^{-1}\int_{B^{+}}
\int_{B^{+}}u_{+}(x)u_{+}(y)\varepsilon_{\alpha}(|x-y|)dxdy+\gamma^{2}\lambda_{1}(B^{-})^{-1}\int_{B^{-}}\int_{B^{-}}u_{-}(x)u_{-}(y)\varepsilon_{\alpha}(|x-y|)dxdy}.
\end{multline}
Since the kernel $\varepsilon_{\alpha}(|x-y|)$ tends to zero as $x\in B^{\pm}, \ y\in B^{\mp}$
and $l\to\infty$,  we observe that
$$\lim_{l\rightarrow\infty}\mathcal{I}_{2}=\lim_{l\rightarrow\infty}\mathcal{I}_{3}=0,$$
thus
\begin{equation}\label{16}
\lim_{l\rightarrow\infty}\lambda_{2}(B^{+}\bigcup B^{-})\geq \max \{\lambda_{1}(B^{+}),\,\lambda_{1}(B^{-})\},
\end{equation}
where $l= {\rm dist}(B^{+},B^{-}).$
The inequalities \eqref{15} and \eqref{16} imply that the optimal set for $\lambda_{2}$ does not exist.
On the other hand, taking $\Omega\equiv B^{+}\bigcup B^{-}$ with $l= {\rm dist}(B^{+},B^{-})\rightarrow\infty$, and $B^{+}$ and $B^{-}$ being identical, from the inequalities \eqref{15} and \eqref{16}
we obtain
\begin{multline*}
\lim_{l\rightarrow\infty}\lambda_{2}(B^{+}\bigcup B^{-})\geq \min \{\lambda_{1}(B^{+}),\,\lambda_{1}(B^{-})\}=\lambda_{1}(B^{+})
\\
=\lambda_{1}(B^{-})\geq\lim_{l\rightarrow\infty}\lambda_{2}(B^{+}\cup B^{-}),
\end{multline*}
and this implies that the minimising sequence for $\lambda_{2}$ is given by a disjoint union of two identical balls with mutual distance going to $\infty$.
\end{proof}

\end{document}